%%%%%%%%%%%%%%%%%%%%%%%%%%%%%%%%
%% preamble
\documentclass[11pt,a4paper]{article}
\usepackage{ifpdf}

\newcommand{\Title}{Omnithermal Perfect Simulation for Multi-server Queues}
\usepackage[british]{babel}
\newcommand{\Date}{\today}

%%%%%%%%%%%%%%%%%%%%%%%%%%%%%%%%
%% Various packages.
\usepackage{calc, xspace}
\usepackage{amsmath, amsthm, amsfonts, euscript, setspace}
\usepackage{algorithm}
\usepackage{algorithmic}

%%%%%%%%%%%%%%%%%%%%%%%%%%%%%%%%
%% Bibtex
\usepackage[longnamesfirst]{natbib}
\setcitestyle{round}

%%%%%%%%%%%%%%%%%%%%%%%%%%%%%%%%
%% Font selection
% \usepackage{times}
\usepackage[T1]{fontenc}
% \usepackage{ccfonts,eulervm}
% \usepackage[expert,altbullet,seriftt]{lucidabr}

%%%%%%%%%%%%%%%%%%%%%%%%%%%%%%%%
%% Colour and Graphics
%% Local colour definitions from
%%     http://www.forret.com/tools/color_palette.asp
\ifpdf
    \usepackage[dvipsnames]{xcolor}
\else
    \usepackage{color}
\fi
\usepackage{graphicx}

\definecolor{linkcolor}{named}{Maroon}
\definecolor{citecolor}{named}{PineGreen}
%\definecolor{pagecolor}{named}{Maroon}
\definecolor{urlcolor}{named}{RoyalPurple}
\definecolor{okcolor}{named}{OliveGreen}
\definecolor{alertcolor}{named}{BrickRed}

\ifpdf
 \DeclareGraphicsExtensions{.png,.jpg}%
\else
 \DeclareGraphicsExtensions{.eps,.ps,.png,.jpg}%
\fi

%%%%%%%%%%%%%%%%%%%%%%%%%%%%%%%%
%% Page format and latex key display
\usepackage{a4wide}
% \usepackage[notcite,color]{showkeys}

%%%%%%%%%%%%%%%%%%%%%%%%%%%%%%%%
%% Mathematics
\theoremstyle{plain}
 \newtheorem{thm}{Theorem}
 \newtheorem{lem}[thm]{Lemma}
 \newtheorem{prop}[thm]{Proposition}
 
\theoremstyle{definition}
 \newtheorem{defn}[thm]{Definition}
\theoremstyle{remark}
\newtheorem{rem}[thm]{Remark}
\newtheorem*{rems}{Remarks}
 \newtheorem{example}[thm]{Example}

\newcommand{\Expect}[1]{\operatorname{\mathbb{E}}\left[#1\right]}
\newcommand{\eps}{\varepsilon}

%%%%%%%%%%%%%%%%%%%%%%%%%%%%%%%%
%% Structure
\usepackage{framed}			
\usepackage[small]{caption}
\usepackage[font=small]{subcaption}

{\begin{figure}[htbp]\begin{framed}}%
{\end{framed}\end{figure}}

%%%%%%%%%%%%%%%%%%%%%%%%%%%%%%%%
%% Tracking queries
%   \usepackage{verbatim,tocloft}
%   \newcommand{\listissuesname}{List of issues at Revision:}
%   \newlistof{issues}{loi}{{\small\listissuesname}}
%   \newcommand{\issuesection}[2]{%
%     \refstepcounter{issues}%
%     \addcontentsline{loi}{issues}{\protect\numberline{\theissues}Section \thesection (#1) #2}\TBC{\textcolor{alertcolor}{\textbf{#1}: #2}}}
%   \newcommand{\nonissuesection}[2]{%
%     \refstepcounter{issues}%
%     \addcontentsline{loi}{issues}{\protect\numberline{\theissues}\textcolor{okcolor}{Section \thesection (#1) [OK] #2}}}
%   \newcommand{\issue}[2]{%
%     \refstepcounter{issues}%
%     \addcontentsline{loi}{issues}{\protect\numberline{\theissues}(#1) #2}\footnote{\scriptsize{\textcolor{alertcolor}{\textbf{#1}: #2}}}}
%   \newcommand{\nonissue}[2]{%
%     \refstepcounter{issues}%
%     \addcontentsline{loi}{issues}{\protect\numberline{\theissues}\textcolor{okcolor}{[OK] (#1) #2}}\footnote{\scriptsize{\textcolor{okcolor}{\textbf{#1}: [OK] #2}}}}
%   \newcommand{\TBC}[1]{%
%     \begin{framed}
% %     \begin{center}
%       \includegraphics[width=1in]{./image/under-construction} % Note: locating graphics for rubber.
%       \newline
%       \textbf{To Do:}   \emph{ #1 }
% %     \end{center}
%     \end{framed}
%   }
\newcommand{\nonissue}[2]{}

%%%%%%%%%%%%%%%%%%%%%%%%%%%%%%%%
%% Material for hyperref package.
 \usepackage[
 \ifpdf
   pdftex,
   pdfstartview=FitH,
 \fi
 ]{hyperref}
 \hypersetup{pdfstartview=FitH}
 \hypersetup{citecolor=citecolor}
 \hypersetup{linkcolor=linkcolor}
 \hypersetup{urlcolor=urlcolor}
 \hypersetup{colorlinks=true}

%%%%%%%%%%%%%%%%%%%%%%%%%%%%%%%%
%% Document
 \begin{document}
%  \pagenumbering{roman}
%  \twocolumn
%  \noindent\textbf{\Large \Title}
%  \smallskip
%  {\scriptsize\verbatiminput{VERSION}}
%  \bigskip
%  \tableofcontents
%  \medskip
%  {\scriptsize\listofissues}
%  \onecolumn
%  \cleardoublepage
%  \pagenumbering{arabic}
%  \setcounter{page}{1}

%%%%%%%%%%%%%%%%%%%%%%%%%%%%%%%%
%% Title page
 \title{\Title}
 \author{%
\href{http://www-users.york.ac.uk/~sbc502/}{Stephen B. Connor}
}
 \date{\Date}
 \maketitle

%\onehalfspacing
%%%%%%%%%%%%%%%%%%%%%%%%%%%%%%%%

\begin{abstract}
A number of perfect simulation algorithms for multi-server First Come First Served queues have recently been developed. Those of \citet{Connor-Kendall-2015} 
and \citet{Blanchet-etal-2015} use dominated Coupling from the Past (domCFTP) to sample from the equilibrium distribution of the Kiefer-Wolfowitz workload vector for stable $M/G/c$ and $GI/GI/c$
queues respectively, using Random Assignment queues as dominating processes. In this note we answer a question posed by \citet{Connor-Kendall-2015}, by demonstrating how these algorithms may be modified in order to carry out domCFTP \emph{simultaneously}
for a range of values of $c$ (the number of servers).
\end{abstract}

 \begin{quotation}
 	\noindent
 	Keywords and phrases:\\
 	{Dominated Coupling from the Past};
 	{First Come First Served discipline};
 	{Kiefer-Wolfowitz workload vector};
 	{perfect simulation};
 	{M/G/c queue};
 	{Random Assignment discipline};
 	{sandwiching};
 	{stochastic ordering}.
 \end{quotation}
 
 \begin{quotation}\noindent
 	2000 Mathematics Subject Classification:\\
 	\qquad Primary 65C05; Secondary 60K25; 60J05; 68U20
 \end{quotation}

\section{Introduction}
There have recently been a number of significant
advances in perfect simulation methods for multi-server
queues. \emph{Perfect simulation} algorithms are able to return an \emph{exact} sample from the  stationary distribution of an ergodic Markov chain (as opposed to an approximate sample, as may be obtained from e.g. Markov chain Monte Carlo) at the expense of a random run-time. 
The first practical algorithm in this line was the Coupling from the Past (CFTP) algorithm, which was conceived by \citet{ProppWilson-1996a} and used to sample from the exact equilibrium distribution of the critical Ising model on a finite lattice. The original CFTP algorithm has since been generalised in a number of ways, with the most significant of these, for the purposes of this paper, being dominated Coupling from the Past (domCFTP) \citep{Kendall-1998d,KendallMoeller-2000}. Importantly, domCFTP can be used for chains with unbounded state space; it relies on knowledge of a \emph{dominating process} for the chain of interest, which can be simulated both in equilibrium and in reverse-time.
\citet{Kendall-2005a} provides a nice introduction to perfect simulation algorithms, while a much more extensive account can be found in the recent book by \citet{Huber-2016}.

Stationary distributions arising from queueing systems involving multiple servers cannot in general be computed explicitly, and so practical methods for sampling from such distributions are of obvious interest.
\citet{Sigman-2011} pioneered the use of domCFTP for \emph{super-stable} $M/G/c$ queues with First
Come First Served (FCFS) discipline. (``Super-stable'' means that the
queue would remain stable even if all but one of the $c$ servers were
removed.) The limitation to super-stable queues is necessitated by
Sigman's use of a stable $M/G/1$ queue as dominating process in the domCFTP algorithm. \citet{Connor-Kendall-2015} subsequently showed how to generalise this idea to work for \emph{stable} $M/G/c$ queues, by using as dominating process an $M/G/c$ queue with Random Assignment (RA) discipline (under which the $c$ servers are independent). They describe two algorithms (outlined in Section~\ref{sec:domCFTP} below) and compare their efficiency; they show that their Algorithm 1, which requires waiting for the dominating process to empty, is significantly less efficient than Algorithm 2, which relies on the coalescence of sandwiching processes (in common with many other domCFTP algorithms).

\citet{Blanchet2018} were the first authors to show how to perform perfect simulation for multi-server queues with general inter-arrival time and service time distributions (i.e. relaxing the assumption of exponential inter-arrival times). Rather than using a random assignment queue as dominating process, they make use of a so-called ``vacation system''. (This idea is also employed by \citet{Blanchet2019} to sample from the equilibrium of a generalized Jackson network of single-server queues.) However, \citet{Blanchet-etal-2015} have since demonstrated how to make the random assignment dominating process work in this setting. The hard part here is working out how to simulate the dominating process in reverse-time; with renewal arrivals, as opposed to Poisson, the $c$ servers in the RA model are no longer independent. These pieces of work all serve to demonstrate that perfect simulation is a practical and efficient method for simulating from a wide class of multi-server queueing systems.

\citet{Connor-Kendall-2015} ask a very natural question: is it possible to carry out dominated CFTP \emph{simultaneously} for \(M/G/c\) queues with a range of \(c\), the number of servers? The authors refer to this as ``omnithermal dominated CFTP'', borrowing a term used to describe \citet{Grimmett-1995}'s coupling of random-cluster processes
for all values of a specific parameter, and applied to CFTP in \cite{ProppWilson-1996a}. The potentially difficult issue in the queueing context is that of detecting a time at which we can be sure that the appropriate sandwiching processes will coalesce for all \(c\) in the range being considered. In this paper we show how such coalescence may be detected with the aid of a simple criterion that uses information about the sandwiching processes only for the queue with the fewest servers. 

The outline of the paper is as follows. In Section~\ref{sec:domCFTP} we recall the definition of the Kiefer-Wolfowitz workload process associated to a multi-server FCFS queue, and then sketch the two perfect simulation algorithms of \cite{Connor-Kendall-2015}. In Section~\ref{sec:omnithermal} we present a natural partial order between Kiefer-Wolfowitz vectors of different lengths, and subsequently use this to determine a condition which ensures that the termination time of \citet{Connor-Kendall-2015}'s Algorithm 2 is monotonic in the number of servers $c$. In Section~\ref{sec:sims} we use this condition to produce an Omnithermal Algorithm, and briefly report on the results of applying this to some $M/M/c$ queues. Finally, in Section~\ref{sec:conclusions} we indicate how our results may be used to perform omnithermal perfect simulation for queues with general renewal input, or in the situation where we are interested in scaling the distribution of service durations, rather than changing the number of servers. The question of sampling from the equilibrium distribution of \emph{controlled} or \emph{adaptive} systems, e.g. where the number of servers is allowed to change over time, is also briefly considered.

\section{Dominated CFTP for $M/G/c$ queues}\label{sec:domCFTP}

Consider a general $\cdot/\cdot/c$ FCFS queue. We denote the  Kiefer-Wolfowitz workload vector \citep{KieferWolfowitz-1955} at time $t\geq 0$ by ${\mathbf V}(t) = (V(1,t),V(2,t),\dots, V(c,t))$, where $V(1,t)\leq V(2,t) \leq\dots$. The entries of ${\mathbf V}(t)$ represent the ordered amounts of residual work in the system for the $c$ servers at time $t$, bearing in mind the FCFS queueing discipline. Customer \(n\) arrives at time $t_n$ (for \(0\leq t_1\leq t_2\leq \ldots\)), with inter-arrival times denoted by $T_n = t_{n+1}-t_n$ (with $t_0 = 0$). Customer \(n\) brings with it a service duration $S_n$.
Observing $\mathbf V$ just before arrival of the \(n^\text{th}\) customer (but definitely after the arrival of the \((n-1)^\text{th}\) customer) generates a process $\mathbf W_n = (W_n(1),W_n(2),\dots, W_n(c))$: in the case \(t_{n-1}<t_n\) we have
$\mathbf W_n= \mathbf V(t_n-)$. This satisfies the well-known recursion
\begin{equation*}\label{eqn:KW-recursion}
\mathbf W_{n+1} \quad=\quad R(\mathbf W_n + S_n\mathbf e - T_n\mathbf f)^+ ,\quad \text{ for }n\geq 0\,, 
\end{equation*}
where $\mathbf e=(1,0,0,\dots,0)$, $\mathbf f=(1,1,\dots,1)$, $R$ places the coordinates of a vector in increasing order, and $^+$ replaces negative coordinates of a vector by zeros (see Chapter 12 of~\cite{Asmussen-2003}). In words, $\mathbf W_{n+1}$ is obtained from $\mathbf W_{n}$ by performing the following sequence of operations:
\begin{enumerate}
	\item add the new workload $S_n$ to the first coordinate (the server currently with least residual work)
	\item subtract $T_n$ from each coordinate (since each server completes work at unit rate between arrival times)
	\item reorder the coordinates of the resulting vector in increasing order
	\item replace any negative coordinates by zeros.
\end{enumerate}
Note in particular that $W_n(1)$ represents the time that the $n^\text{th}$ customer must wait before commencing service.

For simplicity of exposition we shall primarily discuss $M/G/c$ queues in what follows (i.e. inter-arrival times are exponential). However, our method for performing omnithermal perfect simulation for these queues applies equally well to $GI/GI/c$ queues using an algorithm of \citet{Blanchet-etal-2015}, as will be observed in Section~\ref{sec:conclusions}. Let the arrival rate be $\lambda>0$, and let service durations $S_n$ be i.i.d. with mean $1/\mu$ and $\Expect{S^2}<\infty$. (As explained in \cite{Connor-Kendall-2015}, this second moment condition is required in order to guarantee finite mean run-time of their perfect simulation algorithms. In the case of $GI/GI/c$ queues a little more is required, namely that the inter-arrival times and service durations both have a finite $2+\eps$ moment, for some $\eps>0$~\citep{Blanchet-etal-2015}.) Write $\rho=\lambda/(c\mu)$: the queue is stable if and only if $\rho<1$, in which case it is known that $\mathbf{W}_n$ converges as $n\to\infty$ to an equilibrium distribution, and so we restrict attention to this scenario.	

\citet{Connor-Kendall-2015} propose two domCFTP algorithms for sampling from the equilibrium distribution of the Kiefer-Wolfowitz workload vector for a stable $M/G/c$ queue $X$. Both of these algorithms use as dominating process an $M/G/c$ queue $Y$ with Random Assignment service discipline. That is, customers in $Y$ are allocated upon arrival to a uniformly chosen server; this renders the $c$ servers independent, which allows us to easily simulate a stationary version of the dominating process in reverse-time, as required by domCFTP. It is possible to arrange for $X$ to be path-wise dominated by $Y$ as long as the two queues are coupled by assigning service durations in order of \emph{initiation of service}. (Under FCFS customers initiate service in the same order in which they arrive, but this is typically not the case for other service disciplines.) The precise statement of this domination can be found in \cite{Connor-Kendall-2015}, an abridged version of which is reproduced here for convenience.

\begin{thm}[Theorem 3.3 of \cite{Connor-Kendall-2015}]\label{thm:CK3.3}
	Consider a \(c\)-server queueing system viewed as a function of (a) the sequence of arrival times \(0\leq t_1\leq t_2\leq t_3\leq \ldots\) and (b) the sequence of service durations \(S_1, S_2, S_3, \ldots\) assigned in order of initiation of service. Consider the following different allocation rules, in some cases varying over time:  
	\begin{enumerate}
		\item $\cdot/\cdot/c\;[RA]$;
		\item $\cdot/\cdot/c\; [RA]$ until a specified non-random time $T$, then switching to $\cdot/\cdot/c\; [FCFS]$;
		\item $\cdot/\cdot/c\; [RA]$ until a specified non-random time \(T'\), $0\leq T'\leq T$, then switching to \\$\cdot/\cdot/c\; [FCFS]$;
		\item $\cdot/\cdot/c\; [FCFS]$;
	\end{enumerate}
	Then case \(k\) dominates case \(k+1\) (for $k=1,2,3$), in the sense that the \(m^\text{th}\) initiation of service in case \(k+1\) occurs no later than the \(m^\text{th}\)
	initiation of service in case \(k\), and the \(m^\text{th}\) departure in case \(k+1\) occurs no later than the \(m^\text{th}\)
	departure in case \(k\). Moreover, for all times $t\geq T'$ the Kiefer-Wolfowitz workload vector for case 3 dominates (coordinate-by-coordinate) that of case 4, with similar domination holding for cases 2 and 3 for all $t\geq T$.
\end{thm}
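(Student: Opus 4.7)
The plan is to build one coupling in which all four cases are driven by the same sequence of arrival times $t_1,t_2,\ldots$ and the same sequence of service durations $S_1,S_2,\ldots$, with the convention that $S_m$ is assigned to whichever customer is the $m^{\text{th}}$ to initiate service in each case. Since cases 1--4 form a chain differing only in the moment at which the allocation rule switches from RA to FCFS, it is enough to prove a monotone comparison between each successive pair $(k,k+1)$. Two substantively different situations then arise: one in which the systems being compared are driven by different disciplines (the RA-vs-FCFS phase), and one in which both systems run FCFS but start from the differing configurations they have inherited from the RA-vs-FCFS phase.

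For the initiation-time and departure-time inequalities I would proceed by induction on the event index $m$, under the hypothesis that $I_j^{k+1}\leq I_j^k$ and $D_j^{k+1}\leq D_j^k$ for every $j\leq m-1$, where $I_j^{\cdot}$ and $D_j^{\cdot}$ are the times of the $j^{\text{th}}$ initiation and the $j^{\text{th}}$ departure. The inductive step distinguishes whether the $m^{\text{th}}$ initiation in case $k$ is triggered by an arrival into an idle server or by a departure whose freed server then picks up a queued customer; in either sub-case the induction hypothesis, together with the greater eagerness of FCFS (which never leaves a server idle while a customer is waiting, whereas RA may route an arriving customer to a busy server alongside an idle one), delivers the corresponding event in case $k+1$ no later. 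Departure-time domination then follows from initiation-time domination together with the elementary lemma that pointwise $\leq$ between the sequences $\{I_j^{k+1}+S_j\}$ and $\{I_j^k+S_j\}$ is preserved on taking order statistics.

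The main obstacle is the RA-vs-FCFS sub-case, where the two systems being compared are governed by different mechanisms and the identity of the customer holding initiation index $m$ may differ between them, because RA can re-order the initiation sequence relative to arrival order. What has to be verified is that, despite this reshuffling, the set of scheduled completion times of currently occupied servers in the ``more RA'' system dominates, order-statistic by order-statistic, the corresponding set in the ``more FCFS'' system. A convenient device is to strengthen the induction hypothesis so that it tracks the sorted vector of scheduled completion times alongside $(I_j)$ and $(D_j)$, and to close the induction on the enlarged state; this also pre-computes the object one needs for the final step.

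Once the initiation and departure monotonicity has been established, the coordinate-wise Kiefer-Wolfowitz comparison for $t\geq T'$ (respectively $t\geq T$) reduces to a short observation: in any case that is running FCFS at time $t$, the sorted residual workload vector equals the sorted vector of scheduled completion times of currently occupied servers, padded with zeros for idle servers, minus $t$. Applying the sorting lemma to these scheduled completion times yields the stated coordinate-wise domination between the Kiefer-Wolfowitz vectors, and coordinate-wise monotonicity is then preserved for all subsequent $t$ by the FCFS recursion \eqref{eqn:KW-recursion} which governs the evolution of both systems once both have switched.
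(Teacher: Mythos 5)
The paper you are working from does not actually contain a proof of this statement: it is quoted in abridged form from \citet{Connor-Kendall-2015} and the proof is deferred to that reference, so your attempt has to be judged on its own merits. Your outline of the first part (induction over the index $m$ of initiations, plus the elementary fact that pointwise ordering of $\{I^{k+1}_j+S_j\}$ versus $\{I^k_j+S_j\}$ survives taking order statistics, giving the departure ordering) is the standard and correct route. The difficulty is in the device you introduce to close the induction and to pass to workload vectors, and there the proposal has a genuine gap: the strengthened induction hypothesis --- that the sorted vector of scheduled completion times of \emph{currently occupied} servers in the ``more RA'' system dominates that of the ``more FCFS'' system, order statistic by order statistic --- is simply false during the RA phase. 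Take $c=2$, arrivals at times $0$ and $0.1$, $S_1=10$, $S_2=5$, and let RA route the second customer to the busy server: at time $0.2$ the RA system has occupied-server profile $(0,\,9.8)$ (padding with a zero for its idle server) while the FCFS system has $(4.9,\,9.8)$, so the claimed domination fails in the first coordinate. The extra RA work is hidden in a queue behind a busy server while another server idles, which is precisely why the theorem asserts only initiation/departure-epoch monotonicity during the RA phase and claims Kiefer--Wolfowitz domination only for times after the later system has switched to FCFS; an invariant of the kind you propose cannot be propagated through the RA phase.

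The final step has a related flaw. For a FCFS system with customers waiting, the Kiefer--Wolfowitz vector is \emph{not} the sorted in-service completion profile minus $t$: each waiting customer's (future) duration is charged to the server that will free up first, and at the switch time the non-idling FCFS rule immediately puts waiting customers onto idle servers, neither of which your profile records. Nor can you finish by saying that coordinatewise order is preserved by the recursion \eqref{eqn:KW-recursion}, because after the switch the two systems do not feed the same duration to the same arriving customer: durations are attached to the order of initiation, and the initiation orders differ on account of the RA phase, so the two recursions are driven by different inputs and monotonicity of $R(\,\cdot\,+S\mathbf e-T\mathbf f)^+$ with common inputs is not available. A correct way to convert the first part into the workload statement is to apply the already-proved departure ordering to the arrival sequence truncated at time $t$ (a legitimate input sequence): if $N$ is the number of arrivals up to $t$ (the same in both systems) and $D_1\le\cdots\le D_N$ are the departure epochs of this cleared system, then $V(k,t)=\bigl(D_{N-c+k}-t\bigr)^+$, and the ordering of the last $c$ departure epochs gives the coordinatewise domination for every $t\ge T'$ (respectively $t\ge T$). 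Finally, note that for the intermediate comparison of cases 2 and 3 both systems run RA before $T'$, so the coupling must also share the random assignment choices and the induction must start from equality at $T'$; your ``greater eagerness of FCFS'' step as phrased only addresses the comparison against a system that is already FCFS.
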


We can now summarise the two domCFTP algorithms of \cite{Connor-Kendall-2015}.
\begin{algorithm}\caption{}
	\label{alg1}
	\begin{algorithmic}[1]
		\STATE  Construct a stationary $M/G/c\,[RA]$ process backwards in time until it empties at some time $T^*<0$;
		\STATE Use this to create a forwards in time trajectory of an $M/G/c\,[RA]$ queue $Y$ started from empty at time $T^*$; 
		\STATE Use the sequences of arrival times and service durations in $Y$ to construct an $M/G/c\,[FCFS]$ queue $X=\{X_t\,:\,T^*\le t\le 0\}$ that is dominated by $Y$ over $[T^*,0]$, and return $X_0$.
	\end{algorithmic}
\end{algorithm}

Steps 1 and 2 of the algorithm are accomplished as follows (see \cite{Connor-Kendall-2015} for further details.) We first simulate the path of a collection of $c$ stationary $M/G/1$ queues, each of which has arrival rate $\lambda/c$, and which complete work using the Processor Sharing discipline (whereby all jobs are served simultaneously, at a rate depending upon the number of jobs present). We perform this over the time period $[0,\hat T^*]$ where $\hat T^*\ge 0$  is the first time at which all $c$ servers are simultaneously empty, and record the set of \emph{departure times} $0\le \hat t_1\le \hat t_2\le \dots\le \hat t_k = \hat T^*$ and associated service durations $S_1,\dots,S_k$. The $M/G/c\,[RA]$ queue $Y$ is then started from empty at time $T^* = -\hat T^*$ and fed the sequence of arrival times/service durations $(-\hat t_k,S_k),\dots,(-\hat t_1,S_1)$. 

To carry out Step 3, we reorder the set of service durations according to the corresponding \emph{initiation of service} in $Y$. We denote this reordered list by $(S'_1,\dots,S'_k)$: if $J^Y_i$ is the time of initiation of service $S'_i$ in $Y$,	then $T^* = J^Y_1 \leq  J^Y_2 \leq \dots \leq J^Y_k$. Finally, the $M/G/c\,[FCFS]$ queue $X$ is started from empty at time $T^*$ and fed the sequence of arrival times/service durations $(-\hat t_k,S'_1),\dots,(-\hat t_1,S'_k)$. Since both $Y$ and $X$ see the same sequence of arrival times over $[T^*,0]$, and use a common sequence of service durations assigned in order of initiation of service, the domination argument of Theorem~\ref{thm:CK3.3} holds; a standard domCFTP argument then shows that $X_0$ is a draw from the required equilibrium distribution.

\begin{algorithm}\caption{}
	\label{alg2}
	\begin{algorithmic}[1]
		\STATE  Fix a \emph{backoff} (or \emph{inspection}) time $T<0$, and construct a path of the stationary $M/G/c\,[RA]$ queue $Y$ over the time period $[T,0]$; 
		\STATE Construct \emph{sandwiching processes} $L^c = \{L^c_t\,:\,T\le t\le 0\}$ and $U^c= \{U^c_t\,:\,T\le t\le 0\}$ over $[T,0]$ as follows: 
		\begin{enumerate}
			\item $L^c_T$ is empty, while $U^c_T$ is instantiated using the same residual workloads present in $Y_T$;
			\item over $(T,0]$, $L^c$ and $U^c$ both evolve as Kiefer-Wolfowitz vectors of $M/G/c\,[FCFS]$ queues, using the same sequences of arrival times and service durations as $Y$ (once again ordered by initiation of service); 
		\end{enumerate}
		\STATE  Check for coalescence: if $L^c_0 = U^c_0$ return this value; else set $T\leftarrow2T$ and go to Step 1.
	\end{algorithmic}
\end{algorithm}
Connor and Kendall~\cite{Connor-Kendall-2015} provide more details for each of the steps outlined above, and demonstrate that Algorithm~\ref{alg2}, although more complicated to describe, is in general significantly faster than Algorithm~\ref{alg1}. 

%%%%%%%%%%%%%%%%%%%%%%%%%%%%%%%%%%%%%%%%%%%%%%%%%%%%%%%%%%%%%%%%%%%%%%%%%%

\section{Omnithermal perfect simulation}\label{sec:omnithermal}

In this section we consider the following question: is it possible to adapt the domCFTP algorithms outlined in Section~\ref{sec:domCFTP} in order to \emph{simultaneously} sample from the equilibrium of $M/G/(c+m)$ queues for all $m\ge 0$? As pointed out in \cite{Connor-Kendall-2015}, it is straightforward to  accomplish this using Algorithm 1: once an emptying time \(T^*\) has been established for the \(M/G/c\) queue, then any $M/G/(c+m)$ queue may be started from empty at this time and run over $[T^*,0]$ using the same arrival times and service durations; a simple workload domination argument shows that its value at time 0 will form a single perfect sample from the required equilibrium distribution. However, given the significantly faster run-time of Algorithm~\ref{alg2}, a far more interesting question is whether or not one can produce a comparably efficient omnithermal domCFTP algorithm using sandwiching processes.

Suppose that we have implemented Algorithm~\ref{alg2}, and have obtained one equilibrium sample for the $M/G/c$ queue. That is, we have established some backoff time $T<0$, along with sequences of arrival times and service durations, such that $L^c_0=U^c_0$. Our first observation is the following: suppose that we use these sequences to produce new FCFS processes $L^{c+m}$ and $U^{c+m}$ over $[T,0]$ in the manner described in Step 2 of Algorithm~\ref{alg2}. More explicitly, $L^{c+m}_T$ is empty, and $U^{c+m}_T$ is constructed by feeding in all of the residual workloads present in $Y_T$, in order of their initiation of service in $Y$. (In particular, this means that if there are more than $c$ jobs present in $Y_T$ then more than $c$ coordinates of $U^{c+m}_T$ will be non-zero.) $L^{c+m}$ and $U^{c+m}$ are then fed the same sequences of arrival times and service durations as $L^c$ (and $U^c$) over $(T,0]$. The results of doing this are that the workload vector for $U^{c+m}_t$ will dominate (coordinate-by-coordinate) that of $L^{c+m}_t$ for all $t\in[T,0]$, and if $L^{c+m}_0=U^{c+m}_0$ then this value will be a perfect draw from the equilibrium of the $M/G/(c+m)$ queue, as required. This follows from Theorem~\ref{thm:CK3.3}: due to the way in which it is instantiated, $U^{c+m}$ is a queueing system that changes from $M/G/c\,[RA]$ to $M/G/(c+m)\,[FCFS]$ at time $T$. But the former of these can be thought of as an $M/G/(c+m)$ system with a random allocation rule which uniformly distributes jobs amongst only a fixed $c$ of the $(c+m)$ servers; since this is less efficient than the FCFS discipline, the proof of Theorem 3.3 in \cite{Connor-Kendall-2015} holds with this slightly modified setup.

This observation implies that, given the arrival times and service durations used in Algorithm~\ref{alg2} with $c$ servers, we could just construct sandwiching processes $L^{c+m}$ and $U^{c+m}$ over $[T,0]$ and see whether they coalesce. If they do, then we have obtained a sample from the required distribution; if not, then we need to extend the dominating process $Y$ for this sample further into the past (setting $T\leftarrow2T$), and then check again for coalescence. But this is not as clean as we would like: as will be shown in the next section, coalescence of $L^{c}$ and $U^{c}$ over $[T,0]$ does not imply coalescence of $L^{c+m}$ and $U^{c+m}$ over the same interval for all $m>0$. Thus it is possible that the extent to which any single sample path of $Y$ needs to be extended will vary with the value of $m$. Assuming that we want to obtain samples for a range of values of $m$, this method is therefore rather inefficient. Ideally we would like to use Algorithm~\ref{alg2} to produce a sample for the $M/G/c$ queue, and then re-use the path of $Y$ from this run of the algorithm in order to draw from the equilibrium of $M/G/(c+m)$ for any $m>0$.

\subsection{Comparing queues with different numbers of servers}

Suppose that we have two FCFS queues,
each seeing the same set of arrival times and associated service
durations. We first of all need to show that the workload vector with
fewer servers dominates that of the other, with respect to a certain natural
partial order.

\begin{defn}\label{def:parial_order}
	For $V^c\in\mathbb{R}^c$ and $V^{c+m}\in\mathbb{R}^{c+m}$, we
	write $V^{c+m}\preceq V^c$ if and only if
	\[ V^{c+m}(k+m) \le V^c(k)\,, \quad k=1,\dots,c \,. \]
\end{defn}
Thus if $V^c$ and $V^{c+m}$ are workload vectors, $V^{c+m}\preceq V^c$
if and only if each of the $c$ busiest servers in $V^{c+m}$ has no more
work remaining than the corresponding server in $V^c$.

\begin{prop}\label{prop:dom_different_c}
	Let $V^c$ and $V^{c+m}$ be Kiefer-Wolfowitz workload vectors for an
	$M/G/c$ and an $M/G/(c+m)$ FCFS queue respectively. Suppose that
	$V^{c+m}_0\preceq V^c_0$ and that each queue sees the same set of
	arrival times and associated service durations. Then $V^{c+m}_t\preceq
	V^c_t$ for all $t\ge 0$.
\end{prop}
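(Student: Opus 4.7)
My plan is to prove this by induction on customer arrivals. The Kiefer--Wolfowitz recursion \eqref{eqn:KW-recursion} can be decomposed into (a) adding $S_n$ to the least-busy coordinate and re-sorting (arrival), and (b) subtracting $T_n$ from each coordinate and clipping at $0$ (time passage). Since a coordinate-wise monotone map applied to a sorted vector preserves both sortedness and $\tleq$, step (b) is immediate. Likewise, for times $t$ lying strictly between two arrivals, workloads simply decrease at unit rate (truncated at $0$), which preserves $\tleq$. So the whole claim reduces to showing that step (a) preserves the partial order.

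For step (a), write $a = V^c(1) + S_n$ and $b = V^{c+m}(1) + S_n$. From sortedness of $V^{c+m}$ and the inductive hypothesis one has $V^{c+m}(1) \leq V^{c+m}(m+1) \leq V^c(1)$, so $b \leq a$. The sorted vector $R(V^c + S_n\mathbf e)$ coincides with $V^c$ except at a single ``insertion'' index $j$ (the largest index with $V^c(j) \leq a$): the coordinates below $j$ shift down by one place and $a$ occupies position $j$. An analogous index $J$ governs $R(V^{c+m} + S_n\mathbf e)$.

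With $j$ and $J$ in hand, verifying $R(V^{c+m} + S_n \mathbf e)(k+m) \leq R(V^c + S_n \mathbf e)(k)$ for each $k \in \{1,\dots,c\}$ reduces to a short case analysis on where $k$ and $k+m$ lie relative to $j$ and $J$. In each case the desired bound follows from one of three inputs: the inductive hypothesis $V^{c+m}(\ell + m) \leq V^c(\ell)$ applied at a suitable $\ell \in \{1,\dots,c\}$; the inequality $b \leq a$; or the strict inequality $V^{c+m}(J+1) > b$ combined with the hypothesis. (For example, if $k + m < J$ and $k > j$, then $R(V^{c+m}+S_n\mathbf e)(k+m) = V^{c+m}(k+m+1) \leq V^{c+m}(J) \leq b \leq a < V^c(k)$.) Induction on customer number completes the argument, and interspersing with step (b) extends it to all $t \geq 0$.

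The main obstacle is that re-sorting could in principle move workloads across the ``threshold'' pairing $k \leftrightarrow k+m$ that defines $\tleq$; the reason this causes no trouble is precisely the inequality $b \leq a$, which guarantees that the newly inserted value in the larger-dimensional vector never exceeds the newly inserted value in the smaller one. Combined with the sortedness constraints and the inductive partial order, this pins down every case.
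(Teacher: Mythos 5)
Your proposal is correct and follows essentially the same route as the paper: induction over arrivals (the inter-arrival decrease being trivially monotone), with the arrival step handled by a case analysis on the insertion positions of the new job in the two sorted vectors, driven by the key inequality $V^{c+m}(1)+S \le V^{c+m}(m+1)+S \le V^c(1)+S$. Your indices $j,J$ are the paper's $i^c, i^{c+m}$, and the three inputs you list are exactly the inequalities the paper invokes in its corresponding cases.
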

\begin{proof}
	It is clear that the ordering between $V^c$ and $V^{c+m}$ will hold
	until the first arrival time, $\tau$. Furthermore, once we show that
	$V^{c+m}_{\tau}\preceq V^c_{\tau}$ the result will follow simply by
	induction.
	
	Let $S$ denote the service duration attached to the arrival at time
	$\tau$. Recall that the effect of this arrival is that $S$ is added
	to any outstanding work at the first (least busy) coordinate in
	$V^c$ and $V^{c+m}$, and then the resulting vectors are each reordered in
	increasing order. Suppose that after this reordering has taken place, the coordinate with value  $V^c_{\tau-}(1)+S$ (the amount of work now at the server to which the arrival at $\tau$ was allocated) is located in
	position $i^c$ of $V^c_{\tau}$, etc. Note that the result of the reordering is precisely the following:
	\begin{equation}\label{eqn:change_V}
	V^c_{\tau}(k) = \begin{cases}
	V^c_{\tau-}(k+1) & \quad k<i^c \\
	V^c_{\tau-}(1) + S & \quad k=i^c	 \\
	V^c_{\tau-}(k) & \quad k>i^c\,, \\
	\end{cases}
	\quad\text{and}\quad 
	V^{c+m}_{\tau}(k) = \begin{cases}
	V^{c+m}_{\tau-}(k+1) & \quad k<i^{c+m} \\
	V^{c+m}_{\tau-}(1) + S & \quad k=i^{c+m}	 \\
	V^{c+m}_{\tau-}(k) & \quad k>i^{c+m}\,.
	\end{cases}
	\end{equation}
	
	If $i^{c+m}\le m$ then the result is trivial (since the last $c$ coordinates of $V^{c+m}$ are unchanged by the arrival at time $\tau$, and so the ordering between the vectors at time $\tau-$ is clearly maintained). So suppose that $i^{c+m}>m$. Then for
	$k<\min\{i^c,i^{c+m}-m\}$ we have
	\[ V^{c+m}_{\tau}(k+m)= V^{c+m}_{\tau-}(k+m+1) \le V^c_{\tau-}(k+1) =
	V^c_{\tau}(k) \,. \] (Here both of the equalities follow from \eqref{eqn:change_V}, and the inequality from the assumption that $V^{c+m}_{\tau-}\preceq V^c_{\tau-}$.) Analogously, for $k>\max\{i^c,i^{c+m}-m\}$ we have
	\[ V^{c+m}_{\tau}(k+m)= V^{c+m}_{\tau-}(k+m) \le V^c_{\tau-}(k) =
	V^c_{\tau}(k) \,. \]

	For the remaining coordinates there are now two cases to consider, depending on which of $i^c$ and $i^{c+m}-m$ is larger.
	\begin{description}
		\item Case 1: $i^c\le i^{c+m}-m$. Then for $k=i^c,\dots,i^{c+m}-m$:
		\[ V^{c+m}_{\tau}(k+m) \le V^{c+m}_{\tau}(i^{c+m}) = V^{c+m}_{\tau-}(1)+S \le V^c_{\tau-}(1)+S = V^c_{\tau}(i^c) \le V^c_{\tau}(k)\,. \]
		Here the first and last inequalities hold since the coordinates of the workload vectors at time $\tau$ are arranged in increasing order; the middle inequality follows from $V^{c+m}_{\tau-}\preceq V^c_{\tau-}$, and the equalities follow from \eqref{eqn:change_V}\\
		
		\item Case 2: $i^{c+m}-m< i^c$. (Recall that we are already supposing that $i^{c+m}>m$, and so $i^c>1$ here.)
		For $k=i^{c+m}-m,\dots,i^c-1$, using similar arguments as for Case 1, we see that
		\[ V^{c+m}_{\tau}(k+m) \le V^{c+m}_{\tau}(k+m+1) = V^{c+m}_{\tau-}(k+m+1) \le V^c_{\tau-}(k+1) = V^c_{\tau}(k) \,. \]
		The proof is completed by observing that when $k=i^c$,
		\[ V^{c+m}_{\tau}(k+m) = V^{c+m}_{\tau-}(i^c+m) \le V^c_{\tau-}(i^c) = V^c_{\tau}(i^c-1)\le V^c_{\tau}(i^c)\,.\qedhere \]
	\end{description}
\end{proof}

\subsection{Coalescence}

Suppose once again that we have used Algorithm~\ref{alg2} to obtain a single perfect
sample from the $M/G/c$ queue: this yields a backoff time $T<0$ and a
sequence of arrival times and associated service durations such that the
sandwiching processes $U^c$ and $L^c$ coalesce over $[T,0]$. Define
$D^c$ to be the non-negative vector-valued process given by the coordinate-wise
difference between $U^c$ and $L^c$:
\[ D^c_t = U^c_t - L^c_t\,, \quad T\le t \le 0 \,.\]

Let $T^c$ be the coalescence time for this realisation:
\[ T^c = \inf\{t>T\,:\, D^c_t = 0\} < 0 \,. \] We shall write
$|L^c_t|$ for the number of customers in $L^c_t$, and $\mathcal A^c_t$
for the set of coordinates where $U^c_t$ and $L^c_t$ \emph{agree}:
\[ \mathcal A^c_t = \{k\,:\, D^c_t(k)=0,\, 1\le k \le c\} \,. \]

We are interested in the question of whether coalescence of $U^c$ and $L^c$ implies coalescence of $U^{c+m}$ and $L^{c+m}$ (instantiated at time $T$ as described in Section~\ref{sec:domCFTP}) over the same period. The following example shows that this is \emph{not} guaranteed.

\begin{example}\label{ex:fails}
	Consider sandwiching processes for two and three server systems (i.e. $c=2$ and $m=1$), as described above. 
	Suppose that $U^2$ and $U^3$ are both instantiated at time $T=-4$ with a single service duration of length 1, and that these queues proceed to see pairs of arrival times and services $(t,S)$ as follows: $(-3.9, 1.2)$,
	$(-3.7, 1.8)$, $(-3.2,5)$. The evolution of these processes viewed at arrival times is as follows:
	
	\begin{center}
		\begin{tabular}{c|cccc}
			&	$t_0 = -4$ 	&		$t_1 = -3.9$	&		$t_2 = -3.7$		&	$t_3 = -3.2$ \\
			\hline
			$U^2$	&	$(0.0,\,1.0)$		&	$(0.9,\,1.2)$ & $(1.0,\,2.5)$ & $(2.0,\,5.5)$ \\
			$L^2$	&	$(0.0,\,0.0)$ 		&	$(0.0,\,1.2)$	&	$(1.0,\,1.8)$		&	$(1.3,\,5.5)$
		\end{tabular} 
	\end{center}

	If there are no further arrivals within the next two units of time, we see that $U^2$ and $L^2$ will coalesce at time $T^2 = -1.2$ (since it will take two more units of time for their first coordinates to agree, and their second coordinates are already matched).
	
	However, feeding the same sequence of arrival times/services to $U^3$ and $L^3$, we see that they will \emph{not} coalesce before time $T^2$:
	
	\begin{center}
		\begin{tabular}{c|cccc}
			&	$t_0 = -4$ 	&		$t_1 = -3.9$	&		$t_2 = -3.7$		&	$t_3 = -3.2$\\
			\hline
			$U^3$	&	$(0.0,\,0.0,\,1.0)$		&	$(0.0,\,0.9,\,1.2)$ & $(0.7,\,1.0,\,1.8)$ &  $(0.5,\,1.3,\,5.2)$ \\
			$L^3$	&	$(0.0,\,0.0,\,0.0)$ 		&	$(0.0,\,0.0,\,1.2)$		&	$(0.0,\,1.0,\,1.8)$		&	$(0.5,\,1.3,\,5.0)$
		\end{tabular} 
	\end{center}
	
	Furthermore, if we were to consider sandwiching processes for a four-server system, these \emph{would} coalesce by time $T^2$ using the above sequence of arrivals:
	
	\begin{center}
		\begin{tabular}{c|cccc}
			&	$t_0 = -4$ 	&		$t_1 = -3.9$	&		$t_2 = -3.7$		&	$t_3 = -3.2$ \\
			\hline
			$U^4$	&	$(0.0,\,0.0,\,0.0,\,1.0)$		&	$(0.0,\,0.0,\,0.9,\,1.2)$ & $(0.0,\,0.7,\,1.0,\,1.8)$  & $(0.2,\,0.5,\,1.3,\,5.0)$ \\
			$L^4$	&	$(0.0,\,0.0,\,0.0,\,0.0)$ 		&	$(0.0,\,0.0,\,0.0,\,1.2)$		&	$(0.0,\,0.0,\,1.0,\,1.8)$		&	$(0.0,\,0.5,\,1.3,\,5.0)$
		\end{tabular} 
	\end{center}
\end{example}

A simple, and intuitively obvious, condition which guarantees that the sandwiching processes $L^{c+m}$ and $U^{c+m}$ \emph{will}  coalesce by time $T^c$ is that no customer arriving at the lower process $L^c$ during the period $[T,T^c]$ has to wait to commence service:
\begin{prop}\label{prop:L_never_full}
	If $|L^c_t|\le c$ for all $t\in[T,T^c]$ then $D^{c+m}_{T^c}=0$ (and so $T^{c+m}\le T^c$) for any $m\ge 0$.
\end{prop}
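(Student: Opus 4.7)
\medskip
\noindent\textbf{Proof plan.}
The plan is to combine Proposition~\ref{prop:dom_different_c}, applied to each of the pairs $(L^{c+m}, L^c)$ and $(U^{c+m}, U^c)$, with the hypothesis on $|L^c_t|$ to force coalescence of $L^{c+m}$ and $U^{c+m}$ by time $T^c$.

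First I would invoke Proposition~\ref{prop:dom_different_c} on the sandwiching processes themselves. Since $L^{c+m}_T=\mathbf{0}\tleq\mathbf{0}=L^c_T$, the proposition gives $L^{c+m}_t\tleq L^c_t$ for all $t\geq T$; and since $U^{c+m}_T$ is $U^c_T$ padded at the front with $m$ zeros, one analogously gets $U^{c+m}_t\tleq U^c_t$. These yield the coordinatewise bounds $L^{c+m}_t(k+m)\le L^c_t(k)$ and $U^{c+m}_t(k+m)\le U^c_t(k)$ for $k=1,\ldots,c$ throughout $[T,T^c]$.

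The heart of the argument is an induction on arrival events that promotes the first bound to equality. Under the hypothesis $|L^c_t|\le c$, every arrival time $t_n\in(T,T^c]$ satisfies $|L^c_{t_n-}|\le c-1$, so $L^c_{t_n-}(1)=0$, and the dominance relation then forces $L^{c+m}_{t_n-}(1)=0$ as well; consequently the recursion~\eqref{eqn:KW-recursion} adds $S_n$ to a zero coordinate in both vectors. A short induction then shows that for all $t\in[T,T^c]$,
\[
L^{c+m}_t(k+m)=L^c_t(k)\quad (k=1,\ldots,c), \qquad L^{c+m}_t(j)=0\quad (j=1,\ldots,m).
\]
Evaluating at $T^c$, the coalescence $L^c_{T^c}=U^c_{T^c}$ and the sandwich $L^{c+m}\le U^{c+m}$ combine to give
\[
L^{c+m}_{T^c}(k+m)\le U^{c+m}_{T^c}(k+m)\le U^c_{T^c}(k)=L^c_{T^c}(k)=L^{c+m}_{T^c}(k+m),
\]
which forces equality on the last $c$ coordinates. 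For the leading $m$ coordinates, $L^{c+m}_{T^c}(j)=0$ is already in hand, and the bound $U^{c+m}_{T^c}(j)\le U^{c+m}_{T^c}(m+1)=L^c_{T^c}(1)$ closes the gap immediately whenever $L^c_{T^c}(1)=0$.

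I expect the main obstacle to be the residual case $L^c_{T^c}(1)>0$. Between arrivals $D^c_t$ can only decrease through a coordinate on which $L^c$ has already hit zero, so coalescence strictly between arrivals would force $L^c_{T^c}(1)=0$; hence this residual case arises only when $T^c$ coincides with an arrival time $t_n$. Since arrivals preserve $\sum_k D^c(k)$ and $D^c\ge 0$ componentwise, $D^c_{T^c}=0$ then forces $D^c_{T^c-}=0$, and $|L^c_{T^c-}|\le c-1$ gives $L^c_{T^c-}(1)=0$. Applying the easy case at time $T^c-$ yields $L^{c+m}_{T^c-}=U^{c+m}_{T^c-}$, and the common arrival at $T^c$ preserves this equality, delivering $D^{c+m}_{T^c}=0$ and hence $T^{c+m}\le T^c$.
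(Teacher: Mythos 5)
Your proof is correct and follows essentially the same route as the paper: Proposition~\ref{prop:dom_different_c} applied to the upper processes, the observation that under the hypothesis every job in $L^c$ (hence in $L^{c+m}$) starts service immediately so that $L^{c+m}_t(k+m)=L^c_t(k)$ with the first $m$ coordinates zero, and the same closing chain of inequalities at $T^c$. The only difference is that you prove from first principles that $U^c_{T^c}(1)=L^c_{T^c}(1)=0$ at coalescence, whereas the paper simply cites this fact from \cite{Connor-Kendall-2015}.
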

\begin{proof}
	Since no server in $L^c$ ever has more than one customer to deal with at any moment, the same is true for $L^{c+m}$, and so $L^{c+m}_t(k+m) = L^c_t(k)$ for all
	$1\le k\le c$ and for all $t\in[T,T^c]$. Then by the domination established in
	Proposition~\ref{prop:dom_different_c}, and the fact that $U^c_{T^c}=L^c_{T_c}$,
	\[ U^{c+m}_{T^c}(k+m) \le U^c_{T^c}(k) = L^c_{T^c}(k) = L^{c+m}_{T^c}(k+m)\,, \]
	for all $1\le k\le c$, and so the final $c$ coordinates of $L^{c+m}_{T^c}$ and $U^{c+m}_{T^c}$ must agree.
	
	Moreover, coalescence of $U^c$ and $L^c$ implies that there must exist an empty server in both of these processes at time $T^c$ (see \cite{Connor-Kendall-2015}); i.e. $U^c_{T^c}(1) =
	L^c_{T^c}(1) = 0$. Since $U^{c+m}_T\preceq U^c_T$, Proposition~\ref{prop:dom_different_c} ensures that $U^{c+m}_{T^c}\preceq U^c_{T^c}$, and so the first $m$ coordinates of $U^{c+m}_{T^c}$, and of $L^{c+m}_{T^c}$, must all equal zero. Thus $L^{c+m}_{T^c} = U^{c+m}_{T^c}$, as required.
\end{proof}

The condition of Proposition~\ref{prop:L_never_full} is rather strong, and can in fact be weakened, as we now show. 

\begin{thm}\label{thm:main}
	Suppose that any arrival time $\tau\in[T,T^c]$ satisfying $L^c_{\tau-} (1) = U^c_{\tau-} (1)$ (equivalently, $1\in\mathcal A^c_{\tau-}$) also satisfies $U^c_{\tau-}(1) = 0$. Then $T^{c+m}\le T^c$ for any $m\ge 0$.
\end{thm}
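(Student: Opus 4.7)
My plan is to establish the stronger coupling inequality
\[
  U^{c+m}_t(k+m) - L^{c+m}_t(k+m) \;\le\; U^c_t(k) - L^c_t(k)
  \qquad (k = 1, \ldots, c,\; T \le t \le T^c)
\]
and verify that it is maintained throughout $[T, T^c]$ under the hypothesis. This invariant suffices: at $t = T^c$ the right-hand side vanishes, so the last $c$ coordinates of $U^{c+m}_{T^c}$ and $L^{c+m}_{T^c}$ coincide; meanwhile Proposition~\ref{prop:dom_different_c} forces $U^{c+m}_{T^c}(m+1) \le U^c_{T^c}(1) = 0$, and hence the first $m+1$ coordinates of both $(c+m)$-server vectors must be zero as well. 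Together these yield $D^{c+m}_{T^c} = 0$, and hence $T^{c+m} \le T^c$.

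The invariant holds with equality at $t = T$ by construction of the sandwiching processes (both sides reduce to $U^c_T(k)$). Between successive arrivals each coordinate evolves as $(V - \delta)^+$, and the non-expansiveness of $a\mapsto(a-\delta)^+$ together with the bounds $L^{c+m} \tleq L^c$ and $U^{c+m} \tleq U^c$ from Proposition~\ref{prop:dom_different_c} preserves the inequality after a short case check on which of the four entries in question has already hit zero.

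The crux is an arrival time $\tau \in [T, T^c]$, at which each vector $V$ is replaced by $R(V + S\mathbf e)$. I would split according to the hypothesis: in Case~A we have $U^c_{\tau-}(1) > L^c_{\tau-}(1)$, so the augmented entries $U^c_{\tau-}(1) + S$ and $L^c_{\tau-}(1) + S$ are distinct, the ``excess workload'' at position~$1$ survives re-sorting, and the invariant propagates using Proposition~\ref{prop:dom_different_c} on the shifted coordinates. In Case~B we have $U^c_{\tau-}(1) = L^c_{\tau-}(1) = 0$, and Proposition~\ref{prop:dom_different_c} then forces $U^{c+m}_{\tau-}(j) = L^{c+m}_{\tau-}(j) = 0$ for $j = 1, \ldots, m+1$, so $S$ is inserted from an empty slot in each of the four vectors and its re-sorted position can be tracked explicitly. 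The main obstacle is the bookkeeping in Case~B when several $(c+m)$-server entries are tied at zero, and it is precisely here that the hypothesis is indispensable: without it, Example~\ref{ex:fails} shows the two sides of the invariant can cross.
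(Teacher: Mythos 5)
Your overall strategy---exhibit a quantity that is non-increasing through the drift and arrival dynamics and that forces $D^{c+m}_{T^c}=0$---parallels the paper's, but the invariant you choose is far stronger than the one the paper uses, and the step you defer as ``the crux'' is exactly where it breaks. The paper tracks only the scalar $C^c_t=\max_{k\notin\mathcal A^c_t}U^c_t(k)$ and proves $C^{c+m}_t\le C^c_t$ is preserved (Lemma~\ref{lem:monotone}); there the effect of an arrival is fully captured by \eqref{eqn:change_in_C}, and the comparison reduces to $U^{c+m}_{\tau-}(1)\le U^{c+m}_{\tau-}(m+1)\le U^c_{\tau-}(1)$, which is immediate from Proposition~\ref{prop:dom_different_c}. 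Your coordinate-wise difference invariant does not survive the arrival step on the strength of the facts you propose to propagate. Concretely, take $c=2$, $m=1$,
\[
U^2_{\tau-}=(4,10),\quad L^2_{\tau-}=(3,10),\quad U^3_{\tau-}=(3.9,\,4,\,8),\quad L^3_{\tau-}=(0,\,3,\,8),
\]
and an arrival of size $S=3$ at $\tau$. This configuration satisfies everything you carry into the induction step: the sandwiching inequalities $L\le U$, the dominations $U^3_{\tau-}\tleq U^2_{\tau-}$ and $L^3_{\tau-}\tleq L^2_{\tau-}$, and your invariant at $\tau-$ (the two coordinate differences are $1\le 1$ and $0\le 0$); it sits in your Case~A, since $U^2_{\tau-}(1)=4>3=L^2_{\tau-}(1)$, so the theorem's hypothesis imposes no further constraint. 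After the arrival, $U^3_\tau=(4,\,6.9,\,8)$ and $L^3_\tau=(3,3,8)$, while $U^2_\tau=(7,10)$ and $L^2_\tau=(6,10)$; at $k=1$ your invariant would require $6.9-3\le 7-6$, which is false. The mechanism is that the job lands on a heavily loaded (but least-busy) server in $U^{c+m}$ and on an idle server in $L^{c+m}$, creating a fresh discrepancy of size up to $U^{c+m}_{\tau-}(1)-L^{c+m}_{\tau-}(1)$ at a single coordinate---a quantity over which none of your hypotheses gives any control relative to $U^c_{\tau-}(1)-L^c_{\tau-}(1)$.

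So the induction as set up cannot close: either your invariant is actually false along some realisations (you would need to rule out reachability of states like the one above, which your argument does not attempt), or it holds only by virtue of additional structural relations among the four coupled processes that you would have to identify and propagate alongside it. The parts that do work---the base case at $t=T$ (equality by construction), preservation under the drift $a\mapsto(a-\delta)^+$, and the deduction of $D^{c+m}_{T^c}=0$ from the invariant at $T^c$ together with $U^{c+m}_{T^c}(m+1)\le U^c_{T^c}(1)=0$---are correct, but the arrival step is the entire difficulty and it is not established. The remedy is to weaken the invariant to a one-dimensional quantity whose arrival update can be computed exactly; that is precisely what the paper's $C^c_t$ and equation \eqref{eqn:change_in_C} accomplish.
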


In other words, coalescence of $U^{c+m}$ and $L^{c+m}$ is guaranteed by time $T^c$ as long as the following holds: whenever an arriving job finds the same amount of residual work at its allocated servers in $U^c$ and $L^c$, that's precisely because both of those servers are \emph{idle} at that moment.

\begin{rem}
	The condition of Proposition~\ref{prop:L_never_full} is stronger than that of Theorem~\ref{thm:main}. To see this, suppose that $|L^c_t|\le c$ for all $t\in[T,T^c]$. If at some
	arrival time $\tau\in[T,T^c]$ we have $D^c_{\tau-} (1) = 0$ but $U^c_{\tau-}(1) > 0$, then there must be at least $c$ customers in
	$L^c_{\tau-}$ (since $L^c_{\tau-}(1)=U^c_{\tau-}(1)>0$). But then
	the customer arriving at time $\tau$ would force $|L^c_{\tau}|=c+1$,
	which would break our initial assumption. Therefore if $U^c_{\tau-} (1)-L^c_{\tau-} (1) = D^c_{\tau-} (1) = 0$ it
	must be the case that $U^c_{\tau-}(1) = 0$. 
	
	On the other hand, consider a two server system in which $U^2$ is instantiated at time $T=-2$ with a single service duration of length 1, and which sees  pairs of arrival times and services $(t,S)$ as follows: $(-1.9, 1.2)$,
	$(-1.7, 0.6)$, $(-1.6,0.2)$. It is simple to check that if there are no further arrivals, $U^2$ and $L^2$ will coalesce at time $-0.4$. Furthermore, the only arrival time at which $D^2_{\tau-}(1) = 0$ is $\tau=-1.9$, with $U^2_{\tau-}(1) = 0$; thus the condition of Theorem~\ref{thm:main} is satisfied by this example. However, the condition of Proposition~\ref{prop:L_never_full} clearly fails, since $|L^2_t|=3$ for $t\in[-1.6,-1.1)$.
	
	(Note that in Example~\ref{ex:fails} the condition of Theorem~\ref{thm:main} clearly fails for arrival time $t_3$.)
\end{rem}

The key to proving Theorem~\ref{thm:main} is to consider the time until coalescence of the sandwiching processes $U^c$ and $L^c$ when viewed at time $t\ge T$, i.e. the time taken for $U^c_t$ to clear all work in coordinates which disagree with those in $L^c_t$. Let us write $C^c_t$ for this quantity:
\begin{equation}\label{eqn:C_defn}
C^c_t = \max_{k\notin\mathcal A^c_t}U^c_t(k)  =  U^c_t(n^c_t) \,,
\end{equation}
where we define $n^c_t = \max\{1\le k\le c\,:\, k\notin \mathcal A^c_t\}$.

It is clear that the process $C^c = \{C^c_t\,:\, T\le t\}$ decreases deterministically at unit rate until it either hits zero (at which point $L^c$ and $U^c$ coalesce)  or a new customer arrives. Consider then what happens to $C^c$ if there is an arrival at time $\tau$ with associated service duration $S$. Let $k_U$ and $k_L$ be the coordinates satisfying $U^c_\tau(k_U) = 
U^c_{\tau-}(1)+S$ and $L^c_\tau(k_L) = L^c_{\tau-}(1)+S$. That is, the arriving job
gets allocated to the server with the least work in each of $U^c_{\tau-}$ and $L^c_{\tau-}$, and then when the workload vectors are reordered this job finds itself in position $k_U$ in $U^c_\tau$ and $k_L$ in $L^c_\tau$. To be explicit
\[ k_U = \min\{k\,:  U^c_{\tau-}(1)+S \le U^c_{\tau-}(k+1)\,, 1\le k < c  \} \,,\]
with $k_U = c$ if the minimum above is taken over the empty set. Note that this convention -- that the new job is placed at the \emph{lowest} coordinate possible, after reordering, in $U^c_\tau$ -- allows us to deal with the possibility that $U^c_{\tau-}(1) +S= U^c_{\tau-}(k+1)$ for some $1\le k< c$, which would result in the vector $U^c_\tau$ having two matching but non-zero entries. (When arrivals are Poisson this possibility occurs with probability zero, of course, in which case this convention is somewhat unnecessary.) In particular, this implies that 
\begin{equation}\label{eqn:convention}
U^c_\tau(k_U) > U^c_{\tau-}(k_U) \,.
\end{equation}

There are two cases to consider when assessing the impact of an arrival on $C^c$, depending on whether or not the servers with least workload in $U^c_{\tau-}$ and $L^c_{\tau-}$ are in agreement.

\begin{description}
	\item Case 1: $1\in \mathcal A^c_{\tau-}$
	\begin{enumerate}
		\item[(i)] Suppose first that $k_U\ge n^c_{\tau-}$. Since $U^c_{\tau-}(k) =
		L^c_{\tau-}(k)$ for all $k>n^c_{\tau-}$, it must be the case that $k_L = k_U\in \mathcal A^c_\tau$. So $n^c_\tau = n^c_{\tau-}-1$ and
		$C^c_\tau = U^c_\tau(n^c_\tau) = U^c_{\tau-}(n^c_{\tau-}) = C^c_{\tau-}$.
		\item[(ii)] Alternatively, if $k_U<n^c_{\tau-}$ then $n^c_\tau = n^c_{\tau-}$,
		and so $C^c_\tau = C^c_{\tau-}$ once again.
	\end{enumerate}
	Thus there is no change to $C^c$ if the arriving customer finds $U^c_{\tau-}(1) = L^c_{\tau-}(1)$. \\
	\item Case 2: $1\notin \mathcal A^c_{\tau-}$
	\begin{enumerate}
		\item[(i)] Suppose that $k_U\ge n^c_{\tau-}$. Since $U^c_{\tau-}(k) =
		L^c_{\tau-}(k)$ for all $k>n^c_{\tau-}$, it must be the case that $k_L \le
		k_U$. We claim that $k_U\notin\mathcal A^c_\tau$, and so $n^c_\tau= k_U$; it then follows that $C^c_\tau = U^c_\tau(n^c_\tau) = U^c_\tau(k_U) =  U^c_{\tau-}(1) +S>
		C^c_{\tau-}$.
		
		To see that $n^c_\tau= k_U$, we need to show that $L^c_{\tau}(k_U)<U^c_\tau(k_U)$. Notice that 
		\[  L^c_\tau(k_U) = \max\{L^c_{\tau-}(k_U),\,  L^c_{\tau-}(1)+S\} \quad\text{and}\quad U^c_\tau(k_U) = U^c_{\tau-}(1)+S   \,. \]
		Clearly $L^c_{\tau-}(1)+S<U^c_{\tau-}(1)+S$ (since $1\notin \mathcal A^c_{\tau-}$). Furthermore, 
		$L^c_{\tau-}(k_U) \le U^c_{\tau-}(k_U) < U^c_\tau(k_U)$ thanks to \eqref{eqn:convention}.
		
		\item[(ii)] Alternatively, if $k_U<n^c_{\tau-}$ then $k_L\le n^c_{\tau-}$
		also, and so $n^c_\tau = n^c_{\tau-}$. Thus $C^c_\tau = C^c_{\tau-}$.
	\end{enumerate}
	Thus when $1\notin \mathcal A^c_{\tau-}$, $C^c_\tau = \max\{U^c_{\tau-}(n^c_{\tau-}),\,  U^c_{\tau-}(1) +S\}$.
\end{description}

In summary, we see that $C^c_t$ increases only at arrival times $\tau$ for which 
$1\notin\mathcal A^c_{\tau-}$ and $k_U\ge n^c_{\tau-}$. That is,
\begin{equation}\label{eqn:change_in_C}
C^c_\tau = \begin{cases}
C^c_{\tau-} & \quad\text{if $1\in \mathcal A^c_{\tau-}$}\\
\max\{C^c_{\tau-},\,  U^c_{\tau-}(1)+S \} &\quad\text{if $1\notin \mathcal A^c_{\tau-}$}\,.
\end{cases}
\end{equation}

The next result is key to the proof of Theorem~\ref{thm:main}: it shows that, under the same assumption as the theorem, the time to coalescence with $c+m$ servers is dominated by the time to coalescence with $c$ servers.
\begin{lem}\label{lem:monotone}
	Fix some $m\ge 0$, and suppose that the following two conditions both hold at arrival time $\tau$:
	\begin{itemize}
		\item $C^{c+m}_{\tau-} \le C^c_{\tau-}$
		\item if $1\in\mathcal A^c_{\tau-}$	then $L^c_{\tau-} (1) = U^c_{\tau-}(1) = 0$.
	\end{itemize} Then $C^{c+m}_\tau \le C^c_\tau$.
\end{lem}

\begin{proof}
	We consider the two possible scenarios seen by the customer arriving at time $\tau$.
	\begin{enumerate}
		\item $1\in\mathcal A^{c+m}_{\tau-}$.
		\item $1\notin\mathcal A^{c+m}_{\tau-}$ and $1\notin\mathcal A^c_{\tau-}$.
	\end{enumerate}
	(Note that the third possibility, that $1\notin\mathcal
	A^{c+m}_{\tau-}$ and $1\in\mathcal A^{c}_{\tau-}$, is excluded by our
	assumption. Indeed, if $1\in\mathcal A^{c}_{\tau-}$ then our assumption
	forces $L^c_{\tau-}(1) = U^c_{\tau-}(1) = 0$. So the arrival at time $\tau$
	would find a server empty in $U^c$, and hence must
	also find a server empty in $U^{c+m}$ and, therefore, in $L^{c+m}$. But that would imply
	that $1\in\mathcal A^{c+m}_{\tau-}$.)
	
	We treat these two scenarios in order.
	\begin{enumerate}
		\item Since $1\in\mathcal A^{c+m}_{\tau-}$, we know from \eqref{eqn:change_in_C} that the
		coalescence time for $L^{c+m}$ and $U^{c+m}$ is unchanged by the new
		arrival. In addition, the coalescence time for $L^c$ and $U^c$
		cannot decrease due to this arrival. So
		\[ C^{c+m}_\tau = C^{c+m}_{\tau-} \le C^c_{\tau-} \le C^c_\tau \,. \]
		\item
		Here the arrival potentially affects the time until coalescence for
		both pairs of sandwiching processes. However,
		\[ C^{c+m}_\tau = \max\{C^{c+m}_{\tau-},\, 
		U^{c+m}_{\tau-}(1)+S \} \le \max\{C^c_{\tau-},\,  U^c_{\tau-}(1)+S
		\} = C^c_\tau \,, \]
		where the inequality follows from the second assumption of the Lemma, and the previously established fact that $U^{c+m}_{\tau-}\preceq U^c_{\tau-}$. 
	\end{enumerate}
\end{proof}

We can now complete the proof of Theorem~\ref{thm:main}. Recall that the sandwiching processes $L^{c+m}$ and $U^{c+m}$ are started at time $T<0$ with $L^{c+m}_T$ empty and $U^{c+m}_T$ instantiated using the same set of residual workloads that are present in $U^c_T$. Now, it is clear that departures in $U^{c+m}$ occur no later than in $U^c$, and since $C^c_T$ is simply the time taken for all customers present in $U^c_T$ to depart, it follows that 
\[ C^{c+m}_T \le C^c_T \,. \] 
Given that the assumption of Theorem~\ref{thm:main} holds, Lemma~\ref{lem:monotone} tells us that this ordering is preserved for
all $t\in[T,T^c]$:
\[ C^{c+m}_t \le C^c_t \,,\quad t\in[T,T^c] \,. \] But since $L^c$ and $U^c$ coalesce at time $T^c<0$, we see that $C^{c+m}_{T^c} = C^c_{T^c} = 0$, and so $T^{c+m}\le T^c$, as
claimed.

\section{Simulations}\label{sec:sims}

The result of Theorem~\ref{thm:main} provides us with a recipe for performing omnithermal perfect simulation for $M/G/(c+m)$ queues, for any $m\in\mathbb{N}\cup\{\infty\}$.
\vfill\eject

\begin{algorithm}\caption*{\textbf{Omnithermal Algorithm}}
	\label{algO}
	\begin{algorithmic}[1]
		\STATE Use Algorithm~\ref{alg2} to establish a backoff time $T<0$ and upper and lower sandwiching processes $U^c$ and $L^c$ over $[T,0]$ such that $U^c_0=L^c_0$:
		\begin{enumerate}
			\item[(i)] Calculate the coalescence time $T^c\in[T,0]$ of $U^c$ and $L^c$;
			\item[(ii)] If either 
			\begin{enumerate}
				\item[(a)] the condition of Theorem~\ref{thm:main} is satisfied for all arrival times in $[T,T^c]$, or
				\item[(b)] the upper process $U^c$ is empty at some time in $[T,0]$
			\end{enumerate}
			go to Step 2.
			\item[(iii)] Otherwise, set $T\leftarrow 2T$, and use Algorithm~\ref{alg2} to extend the simulation of the sandwiching processes over the new window $[T,0]$. \\Go back to Step 1(i).
		\end{enumerate}
		\STATE For each required $m\in\mathbb{N}\cup\{\infty\}$, construct $L^{c+m}$ over $[T,0]$, using the same sequence of arrival times and services as in the construction of $L^c$. Return $L^{c+m}_0$ as a perfect equilibrium draw of the Kiefer-Wolfowitz vector for the $M/G/(c+m)$ queue of interest.
	\end{algorithmic}
\end{algorithm}

\begin{rems}
	\begin{enumerate}
		\item 
		In Step 1(ii), in addition to checking whether the condition of Theorem~\ref{thm:main} is satisfied, we check whether the upper sandwiching process $U^c$ has emptied. This is another sufficient condition for coalescence of all pairs of sandwiching processes with more servers (as pointed out at the start of Section~\ref{sec:omnithermal}), and including this condition allows for a simple argument that the run-time of the Omnitheral Algorithm has finite expectation. 
		Indeed, let $T_{RA} = \sup\{t\le 0\,:\, Y_t = 0\}$, where $Y$ is the random assignment dominating process used in Algorithm~\ref{alg2} during Step 1. Our standing assumption that $\Expect{S^2}<\infty$ ensures that the stationary process $Y$ is positive recurrent, and so $\Expect{|T_{RA}|}<\infty$. If Step 1 uses a backoff time $T$ satisfying $T<T_{RA}$ then the upper process $U^c$ constructed over $[T,0]$ will clearly be empty at time $T_{RA}$ (since $U^c$ is dominated by $Y$); condition 1(ii)(b) of the Omnithermal Algorithm will then prevent any further backing off. Hence the final backoff time $T$, and hence the run-time of the Omnithermal Algorithm, has finite expectation as claimed.
		
		\item
		If we are called upon to use Step 1(iii) of the algorithm and extend the simulations of $U^c$ and $L^c$ further into the past, we are guaranteed that these new sandwiching processes ($\tilde U^c$ and $\tilde L^c$, say) will still coalesce by time $T^c$: this follows from Theorem 5.1 of \cite{Connor-Kendall-2015}, which implies that 
		\[ L^c_t \preceq \tilde L^c_t \preceq \tilde U^c_t \preceq U^c_t, \quad t\in[T,0]\,.  \]
		
		\item	
		
		Note that in Step 2 we have included the possibility that $m=\infty$: with infinitely many servers each customer is assigned its own server upon arrival, irrespective of how many customers are being served at the time. If the condition of Theorem~\ref{thm:main} holds then it is simple to see that the bounding processes $U^\infty$ and $L^\infty$ will coalesce before $T^c$, and thus the Omnithermal Algorithm can also be applied in this setting.
	\end{enumerate}
\end{rems}

Simulation results from 1,000 runs of the Omnithermal Algorithm for $M/M/c$ queues with various parameter combinations are presented in Table~\ref{fig:backoff}. In each case we recorded how many runs of Algorithm~\ref{alg2} required additional backoff (as in Step 1(iii) of the Omnithermal Algorithm) in order to produce an omnithermal sample. This increased with $\rho$, as might be expected. Possibly more surprising however, is the observation that for any fixed value of $\rho$ the proportion of runs which required extending initially increased before decreasing as a function of $c$. For those runs which did need extending, we also recorded the number of additional backoffs required. Table~\ref{fig:backoff} shows the median, upper quartile and maximum of these values: note that for all combinations of parameters the upper quartile was at most 2. This indicates that the additional computational overhead of using the Omnithermal Algorithm is relatively minimal in most cases considered here. 

The final line of each entry in Table~\ref{fig:backoff} shows the percentage of runs for which the stronger condition of Proposition~\ref{prop:L_never_full} was satisfied, i.e. for which no customer arriving at the lower sandwiching process $L^c$ before coalescence had to wait to commence service. Note the negative correlation between this figure and the proportion of runs which needed to be extended. For relatively low values of $\rho$ the stronger condition is nearly always satisfied, but for higher values the difference in practice between the conditions of Proposition~\ref{prop:L_never_full} and Theorem~\ref{thm:main} becomes much more apparent.

\newcommand{\sbc}[3]{\begin{tabular}{@{}c@{}}#1\%\\(#2)\\#3\%\end{tabular}}
\begin{table}[h]
	\caption{Simulation results obtained from applying the Omnithermal Algorithm to an $M/M/c$ queue with $\mu=1$ and a range of values of $c$ and $\rho=\lambda/(\mu c)$; 1,000 runs were performed for each combination of parameters. For each table entry: the first line shows the percentage of runs which needed extending further into the past using the binary backoff scheme in Step 1(iii) of the algorithm; for those runs which required extending, the second line reports the (median, upper quartile, maximum) of the number of additional backoffs required; the third line shows the percentage of runs for which the condition of Proposition~\ref{prop:L_never_full} was satisfied.}  \label{fig:backoff}
	\centering
	\begin{tabular}{c|c|c|c|c|c|c|c}
		$\rho\;\;\backslash \;\; c$	 & 2 & 4 & 8 & 16 & 32 & 64  \\ 
		\hline
		0.65 & \sbc{1.5}{1,2,3}{96}  & \sbc{3.2}{1,1,2}{93} & \sbc{7.6}{1,1,3}{94} & \sbc{1.5}{1,1,2}{98} & \sbc{0.4}{1,1,1}{99} & \sbc{0}{--}{100}  \\ 
		\hline
		0.75 & \sbc{2.3}{1,1,2}{88}  & \sbc{8.3}{1,1,2}{78}  & \sbc{18.5}{1,1,5}{76} & \sbc{13.9}{1,1,3}{83} & \sbc{6.2}{1,1,5}{92} & \sbc{0.7}{1,1,1}{99}  \\ 
		\hline
		0.85 & \sbc{1.9}{1,1,3}{69} & \sbc{14.2}{1,1,2}{47} & \sbc{32.3}{1,2,5}{30} & \sbc{34.4}{1,2,9}{24} & \sbc{30.6}{1,2,10}{26} & \sbc{14.3}{1,1,6}{53}
	\end{tabular}
\end{table} 

Finally, as a simple demonstration of the desirability of being able to produce omnithermal samples, we used our algorithm to investigate the effect on workload of changing server number for a heavily loaded $M/M/c$ queue. We ran the algorithm 5,000 times using arrival rate $\lambda =2.85$, service rate $\mu=1$ and $c=3$ ($\rho=0.95$); 333 runs (7\%) needed extending further into the past as in Step 1(iii) of the algorithm, with only two of these requiring more than two additional backoffs. We then used the output to produce perfect samples of the  Kiefer-Wolfowitz workload vectors for $m\in\{0,1,2,3\}$. Figure~\ref{fig:means} shows the mean value of each coordinate of the vectors obtained. Increasing the number of servers from three to four can be seen to decrease the value of the first coordinate (which represents the expected waiting time of a customer arriving in equilibrium) by a factor of ten. Further detail is provided in Figure~\ref{fig:CDFs}, where we show the effect on the distribution function of the remaining workload in equilibrium at the first and last coordinates of the Kiefer-Wolfowitz vectors for the same set of simulations. 

\begin{figure}[h]
	\centering
	\includegraphics[width=8cm]{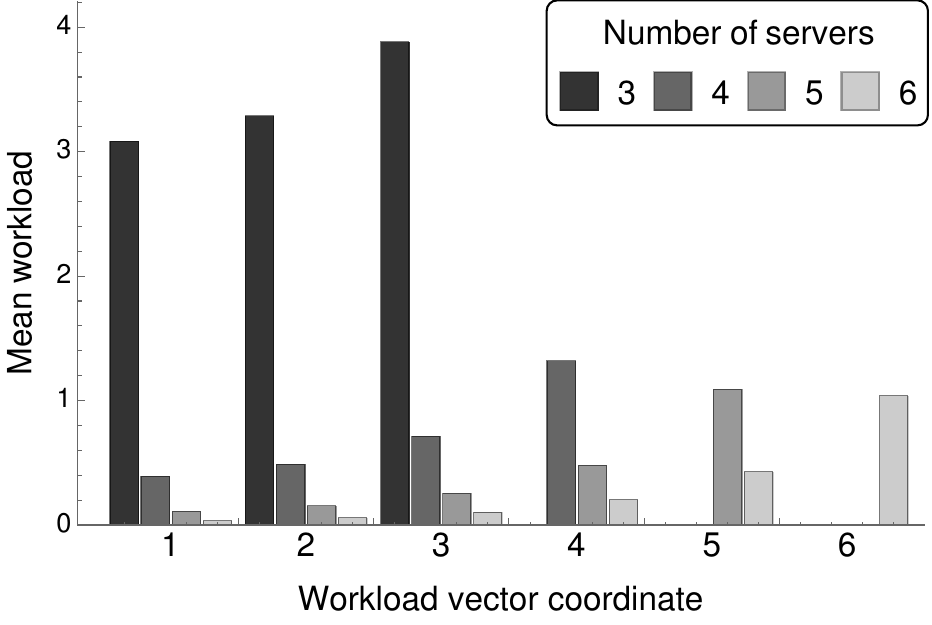}
	\caption{Mean of each coordinate of the workload vector for an $M/M/(c+m)$ queue with $\lambda = 2.85$, $\mu=1$, $c=3$ and $m\in\{0,1,2,3\}$. (Results from 5,000 runs of the Omnithermal Algorithm.) 
	}
	\label{fig:means}
\end{figure}

\begin{figure}[h]
	\centering
	{\includegraphics[width=6cm]{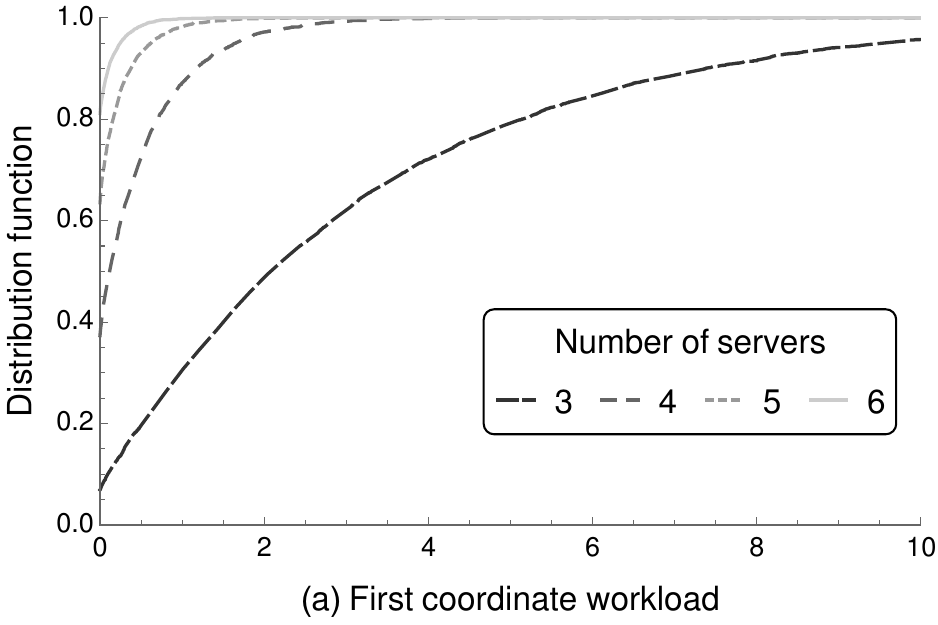}}\hspace{10mm}
	{\includegraphics[width=6cm]{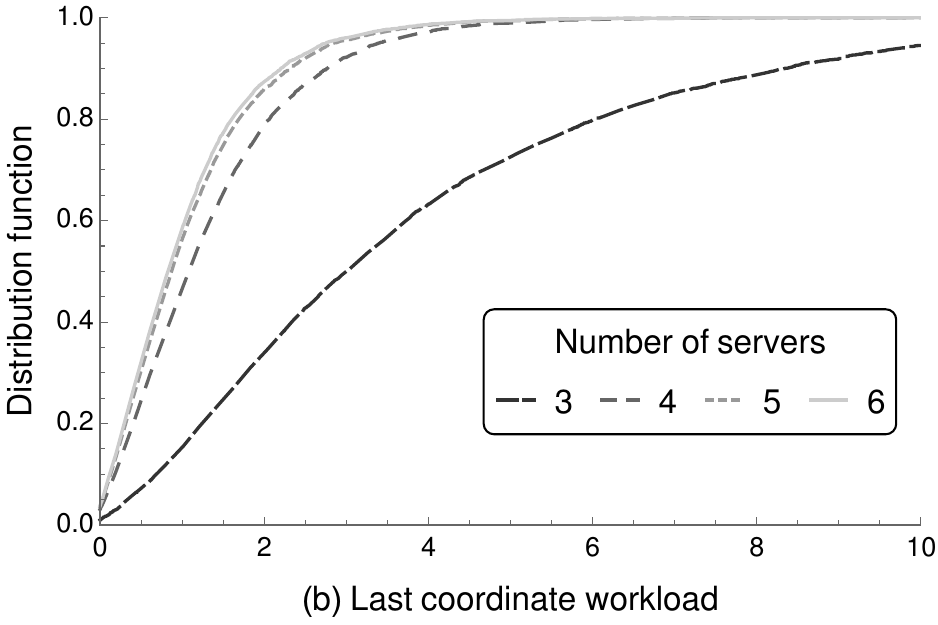}}
	\caption{Distribution functions for workload at (a) first and (b) last coordinates of the workload vector, for the set of simulations presented in Figure~\ref{fig:means}.
	}
	\label{fig:CDFs}
\end{figure}

\section{Variants and conclusions}\label{sec:conclusions}

We have shown how the efficient Algorithm~\ref{alg2} of \cite{Connor-Kendall-2015} for $M/G/c$ queues may be modified to allow for omnithermal perfect simulation; our new algorithm uses a simple test to determine whether or not the dominating process used for the $c$-server algorithm needs to be extended further into the past in order to allow for simultaneous sampling from $M/G/(c+m)$ queues for any $m\in\mathbb{N}\cup\{\infty\}$. The Omnithermal Algorithm has finite expected run-time and, furthermore, we have provided numerical evidence which suggests that for a wide range of parameters it involves relatively little additional computational expense. We conclude by briefly considering two variants of our algorithm. 

\subsection{Varying other system parameters}

An alternative natural setting in which one may be interested in omnithermal simulation is that in which the stability of the queue is increased by having shorter service durations, rather than adding more servers. (We could equivalently consider queues with longer inter-arrival times of course; however, for the domination arguments of Section~\ref{sec:omnithermal} to hold it is essential that the two systems being compared have the same set of arrival times. It is therefore more convenient to adjust the service durations instead.) Suppose that service times in the more stable system are distributed as $\beta S$ for some $\beta\in(0,1]$: this is equivalent to the service times being distributed as $S$, but with each server now completing work at rate $\beta^{-1}$. So we can compare the two systems as in Section~\ref{sec:omnithermal}, feeding both the same sets of arrival times and service durations, but with the time until coalescence in \eqref{eqn:C_defn} replaced by
\[ C_t^\beta = \beta U_t^\beta(n_t^\beta) \]
(where we have once again used the superscript to indicate the parameter that varies between the queues under consideration).

In a similar manner to Example~\ref{ex:fails}, it is easy to conjure up a set of arrival times and service durations such that the system completing work at rate $\beta^{-1}$ has $C_t^\beta>C_t^1$ for some values of $t$. However, we note that in this new setting equation~\eqref{eqn:change_in_C} becomes
\begin{equation*}
C^\beta_\tau = \begin{cases}
C^\beta_{\tau-} & \quad\text{if $1\in \mathcal A^\beta_{\tau-}$}\\
\max\{C^\beta_{\tau-},\, \beta(U^\beta_{\tau-}(1)+S) \} &\quad\text{if $1\notin \mathcal A^\beta_{\tau-}$}\,.
\end{cases}
\end{equation*}
Using this, it is a simple exercise to check that if the condition of Theorem~\ref{thm:main} is satisfied,  the sandwiching processes for the faster-working system will coalesce no later than do $U^c$ and $L^c$ for the original $M/G/c$ queue. In other words, we can perform omnithermal simulation in this setting by simply replacing Step 2 of the Omnithermal Algorithm with the following variant: 

\begin{itemize}
	\item[$2'$.] For any $\beta\in(0,1)$, construct $L^\beta$ over $[T,0]$, using the same set of arrival times and services as in the construction of $L^c$. Return $L^\beta_0$ as a perfect equilibrium draw of the Kiefer-Wolfowitz vector for the $M/G/c$ queue, in which work is completed at rate $\beta^{-1}$. 
\end{itemize}
(Step 1 -- in which we possibly extend some simulations further into the past -- does not change at all.)

In addition, we note that the coalescence arguments underpinning Section~\ref{sec:omnithermal} do not rely in any way on the distribution of inter-arrival times. As noted in the introduction, \citet{Blanchet-etal-2015} have recently shown how to implement domCFTP for $GI/GI/c$ queues using a random assignment dominating process with upper and lower sandwiching processes in the style of Algorithm~\ref{alg2} above. It is therefore possible to perform omnithermal simulation for these queues, by using their algorithm in place of Algorithm~\ref{alg2} in Step 1 of the Omnithermal Algorithm.

\subsection{Perfect simulation for adaptive systems}
In practical queueing situations it may be possible, indeed desirable, for a queue manager to alter the number of servers being employed at any given time, in response to either endogenous or exogenous effects, in order to strike a balance between server utilization and customer waiting times. A variety of mathematical models exist for such adaptive systems, with relevance to applications in telecommunication and road traffic networks. See, for example, \cite{LI2000615,Kafetzakis-2011,Bruneel-2016}.

Given that the Omnithermal Algorithm allows for simultaneous sampling of $M/G/(c+m)$ queues for any $m\ge 0$, it is natural to wonder whether it can also be applied to systems in which the parameter $m$ is allowed to vary as a function of the set of customers present in the system. Unfortunately, for many natural models of adaptive systems it is not the case that the monotonicity of workload vectors established in Proposition~\ref{prop:dom_different_c} is guaranteed to be maintained; in particular, it becomes possible for customers to depart from the upper sandwiching process sooner than from the lower one. Examples of such models include ones in which the number of servers at time $t$, $c_t$, evolves as a function of $c_{t-}$ and either the length of time since some server was last idle (the length of the current busy period), or  the number of customers waiting to begin service at time $t$.

Similarly, models in which servers can take vacations when idle, or in which the service rate can be altered as a function of the number of customers waiting, can be seen to exhibit monotonicity problems. The only sensible adaptive model which seems to (somewhat obviously) maintain monotonicity between upper and lower sandwiching processes is one in which $c_t$ depends upon $\mathrm{sgn}(Q_t-c_{t-})$, where $Q_t$ is the number of customers in the system at time $t$. That is, $c_t$ depends upon whether (taking into account a possible arrival or departure at time $t$) the system has a surplus, just the right number, or a deficit of servers. (But, importantly, the \emph{size} of any deficit can't be used to control $c_t$.) With this setup, one could allow $c_t$ to evolve according to the rule: \emph{if there is a deficit, and there are servers to spare, add one immediately; if there is a surplus, reduce the number of servers if you wish}. This system still obeys the monotonicity of Proposition~\ref{prop:dom_different_c},  meaning it is possible to sample perfectly from its equilibrium distribution using a simple variant of the Omnithermal Algorithm. However, the resulting equilibrium could just as easily be obtained by sampling from the $M/G/(c+m_{\max})$ system (where $(c+m_{\max})$ is the maximum available number of servers, possibly equal to $\infty$) and then ignoring any servers which are idle at time zero.

For more interesting adaptive systems, for which the monotonicity of Proposition~\ref{prop:dom_different_c} fails to hold, it may be possible to carry out perfect simulation under the assumption that the (variable) number of servers used is always at least $c$, with the corresponding $M/G/c$ system being stable. In this case we may be able to use the Omnithermal Algorithm with Step 1(ii)(a) removed: for reasonable control processes (e.g. ones in which $c_t$ is increasing in some measure of how busy the system is), $U^c$ will dominate the workload vector for the adaptive system started from zero at time $T$, and so we simply have to backoff until $U^c$ empties before time zero. Variations on this idea may be necessary, or of course there may be some other monotonicity which could be exploited to produce an entirely different algorithm, depending upon the exact control policy. Note that the time taken for $U^c$ to empty will in general be rather large (e.g. for the simulations presented in Table~\ref{fig:backoff}, $U^c$ emptied before time zero in less than $1\%$ of runs with $c\ge 8$), but will certainly be no greater than the run-time of Algorithm~\ref{alg1}.

%%
%% The next two lines define the bibliography style to be used, and
%% the bibliography file.
%\bibliographystyle{ACM-Reference-Format}
%\bibliography{PerfectQueues}

% ===================================================
\bibliographystyle{chicago} \bibliography{PerfectQueues}

\end{document}